\tikzstyle directed=[postaction={decorate,decoration={markings,
    mark=at position #1 with {\arrow{>}}}}]
\tikzstyle rdirected=[postaction={decorate,decoration={markings,
    mark=at position #1 with {\arrow{<}}}}]
\tikzset{anchorbase/.style={baseline={([yshift=-0.5ex]current bounding box.center)}},
arrowinthemiddle/.style={postaction=decorate,decoration={markings,mark=at position 0.5 with {\arrow{>}}}},
arrowinthemiddlerev/.style={postaction=decorate,decoration={markings,mark=at position 0.5 with {\arrow{<}}}},
cross line/.style={preaction={draw=white,line width=4pt,-}},
int/.style={very thick},
zero/.style={thin,dotted},
uno/.style={thin},
aboverotated/.style={above,rotate=60,anchor=west},
belowrotated/.style={below,rotate=60,anchor=east}}
\newcommand{\midarrow}{node[midway,sloped,allow upside down] {\hspace{0.05cm}\tikz[baseline=0] \draw[->] (0,0) -- +(.001,0);}}
\newcommand{\midarrowrev}{node[midway,sloped,allow upside down] {\hspace{0.05cm}\tikz[baseline=0] \draw[-<] (0,0) -- +(.001,0);}}
\newcommand{\nendarrow}{node[near end,sloped,allow upside down] {\hspace{0.05cm}\tikz[baseline=0] \draw[->] (0,0) -- +(.001,0);}}
\newcommand{\nendarrowrev}{node[near end,sloped,allow upside down] {\hspace{0.05cm}\tikz[baseline=0] \draw[-<] (0,0) -- +(.001,0);}}
\newcommand{\startarrowrev}{node[at start,sloped,allow upside down] {\hspace{0.05cm}\tikz[baseline=0] \draw[-<] (0,0) -- +(.001,0);}}
\newcommand{\midarrowint}{node[midway,sloped,allow upside down] {\hspace{0.05cm}\tikz[baseline=0] \draw[int,->] (0,0) -- +(.001,0);}}
\newcommand{\nendarrowint}{node[near end,sloped,allow upside down] {\hspace{0.05cm}\tikz[baseline=0] \draw[int,->] (0,0) -- +(.001,0);}}
\numberwithin{equation}{section}
\let\ams@starttoc\@starttoc
\let\@starttoc\ams@starttoc
\patchcmd{\@starttoc}{\makeatletter}{\makeatletter\parskip\z@}{}{}
\newtheoremstyle{myplain} {6pt plus 6pt minus 2pt}
{6pt plus 6pt minus 2pt}
{\itshape}
{}
{\bfseries}
{.}
{.5em}
{}
\theoremstyle{myplain}
\newtheorem{theorem}{Theorem}[section]
\newtheorem*{theorem*}{Theorem}
\newtheorem{lemma}[theorem]{Lemma}
\newtheorem{prop}[theorem]{Proposition}
\newtheorem{corollary}[theorem]{Corollary}
\newtheoremstyle{mydefinition} {6pt plus 6pt minus 2pt}
{6pt plus 6pt minus 2pt}
{\itshape}
{}
{\bfseries}
{.}
{.5em}
{}
\theoremstyle{mydefinition}
\newtheorem{definition}[theorem]{Definition}
\newtheoremstyle{myexample} {6pt plus 6pt minus 2pt}
{6pt plus 6pt minus 2pt}
{}
{}
{\scshape}
{.}
{.5em}
{}
\theoremstyle{myexample}
\newtheoremstyle{myremark} {6pt plus 6pt minus 2pt}
{6pt plus 6pt minus 2pt}
{}
{}
{\scshape}
{.}
{.5em}
{}
\theoremstyle{myremark}
\newtheorem{remark}[theorem]{Remark}
\theoremstyle:=mydefinition,myremark,myplain,myexample\do{%
        \expandafter\g@addto@macro\csname th@\theoremstyle\endcsname{%
            \addtolength\thm@preskip\parskip
            }%
        }
\DeclareSymbolFontAlphabet{\mathbb}{AMSb}
\DeclareSymbolFontAlphabet{\mathbbol}{bbold}
\DeclareMathAlphabet{\mathpzc}{OT1}{pzc}{m}{it}
\DeclareSymbolFont{usualmathcal}{OMS}{cmsy}{m}{n}
\DeclareSymbolFontAlphabet{\mathucal}{usualmathcal}
\newcommand{\Z}{\mathbb{Z}}
\newcommand{\C}{\mathbb{C}}
\newcommand{\gl}{\mathfrak{gl}}
\newcommand{\suchthat}{\mid} 
\newcommand{\mapto}{\rightarrow}
\newcommand{\into}{\hookrightarrow}
\DeclareMathOperator{\End}{End}
\DeclareMathOperator{\tr}{tr}
\newcommand{\abs}[1]{\left|#1\right|}
\renewcommand{\epsilon}{\varepsilon}
\renewcommand{\phi}{\varphi}
\newcommand{\calG}{{\mathcal{G}}}
\newcommand{\calQ}{{\mathcal{Q}}}
\newcommand{\boldeta}{{\boldsymbol{\eta}}}
\newcommand{\Spider}{\mathsf{Sp}}
\newcommand{\fieldk}{{\mathbbm{k}}}
\newcommand{\ucalH}{{\mathucal H}}
\newcommand{\RT}{{\mathcal{RT}}}
\newcommand{\onepartition}{\tikz[baseline]{\draw (0,0) rectangle (1ex,1ex);}}
\newcommand{\var}{t}
\newcommand{\unknot}{\mathord{\bigcirc}}
\newcommand{\uBr}{\mathsf{Br}}
\newcommand{\catTangles}{{\mathsf{Tangles}}}
\newcommand{\lab}{{\ell\!\textit{a\hspace{-1pt}b}}}
\newcommand{\catRep}{{\mathsf{Rep}}}
\newcommand{\emptysequence}{\varnothing}
\newcommand{\empha}[1]{\emph{\textbf{\boldmath #1}}}
\title{A note on the \(\gl_{m|n}\) link invariants and the HOMFLY-PT polynomial}
\author{Hoel Queffelec}
\address{MSI, Australian National University, John Dedman Building, 27 Union Lane, Canberra ACT 0200}
\email{hoel.queffelec@anu.edu.au}
\thanks{H.Q.\ was funded by the ARC DP 140103821.}
\author{Antonio Sartori}
\address{Mathematisches Institut, Albert-Ludwigs-Universität Freiburg,
Eckerstraße 1, 79104 Freiburg im Breisgau, Germany}
\email{antonio.sartori@math.uni-freiburg.de}
\tikzset{smallnodes/.style={every node/.style={font=\footnotesize}}}
\begin{document}

\begin{abstract}
We present a short and unified representation-theoretical treatment of type A link invariants (that is, the HOMFLY-PT polynomials, the Jones polynomial, the Alexander polynomial and, more generally, the  \(\gl_{m|n}\) quantum invariants) as link invariants with values in the quantized oriented Brauer category.
\end{abstract}
\maketitle

\section{Introduction}
\label{sec:introduction}

The HOMFLY-PT polynomial \cite{HOMFLY,PT} is a 2-variable polynomial link invariant generalizing the Jones polynomial \cite{Jones}, the Alexander polynomial \cite{MR1501429} and the \(\mathfrak{sl}_k\) Reshetikhin-Turaev link invariant \cite{MR1036112}. 
One can also label the strands of the links by partitions, and obtain colored versions of these polynomials. All these link invariants have the common property of arising from the representation theory of the Lie superalgebra \(\gl_{m|n}\), and because of this we call them \emph{invariants of type A}.

In more detail, link invariants of type A can be constructed as \(U_q(\gl_{m|n})\)--equivariant homomorphisms. In this short note, we present a unified approach to such link invariants  by seeing them as invariants with  values in the quantized oriented Brauer category,  a universal category describing intertwiners of \(U_q(\gl_{m|n})\)--representations. We also define reduced link invariants by  the usual trick of cutting open one of the strands.

This approach enables to give a very easy construction of  link invariants of type A, which does not directly require the knowledge of quantum group R-matrices. Moreover, this  yields short, clean and self-contained proofs of some well-known folklore invariance and symmetry properties, which are particularly interesting also for categorification problems (see \cite{2013arXiv1304.3481G}). In particular, we prove that colored \(\gl_{m|n}\) polynomials of links only depend on the difference \(d=m-n\) (see 
 Theorem~\ref{thm:1} and, in the reduced case, Proposition~\ref{prop:2}), and we use an automorphism of the Brauer category to prove the so-called mirror symmetry of the HOMFLY-PT polynomial (Theorem~\ref{thm:2}).

\section{Notation and conventions}
\label{sec:overview}

We will work over a ring \(\fieldk\) containing the field \(\C(q)\) and an element \(q^\var\) which is not a root of  unity. In particular, we will consider two possibilities:
\begin{enumerate}[label=(\alph*),topsep=-1ex,itemsep=-1ex,partopsep=0ex,parsep=1ex]
\item \label{item:1} \(\var=\beta\) and \(\fieldk=\C(q)[q^{\pm \beta}]\),
  i.e.\ an extension of \(\C(q)\)
  by a formal variable \(q^\beta\), or 
\item \label{item:2} \(\var=d \in \Z\) and \(\fieldk=\C(q)\).
\end{enumerate}
In the first case, we will say that \(\var\), or \(\beta\),
  is \emph{generic}. As a convention, we will use the letter \(\var\) for encompassing both cases above, while we will use \(\beta\) when we will assume that we are in case~\ref{item:1} and we will use \(d\) when we will assume that we are in case~\ref{item:2}.
For \(x \in \Z \var + \Z\) we define
\begin{equation}
  \label{eq:15}
  [x]  = \frac{q^x - q^{-x}}{q - q^{-1}}.
\end{equation}

\subsection{Tangles}
\label{sec:tangles}

 Let \(\catTangles\) be the monoidal category 
of oriented framed tangles, whose objects are sequences of orientations \(\{\uparrow,\downarrow\}\) and whose morphisms are oriented framed tangles modulo isotopy.
Let also \(\catTangles_\fieldk\) be its \(\fieldk\)--linear version, with the same objects but with morphisms being  \(\fieldk\)--vector spaces
\begin{equation}
  \label{eq:2}
  \catTangles_{\fieldk}(\boldeta,\boldeta') = \operatorname{span}_{\fieldk} \catTangles(\boldeta,\boldeta').
\end{equation}

\begin{wrapfigure}{r}{2cm}%
\vspace{-0.45cm}
 \begin{tikzpicture}[scale=0.4,every
    node/.style={font=\footnotesize},anchorbase]
    \draw (0,0) rectangle (5,-1); \draw (0,-1) rectangle
    (4,-2); \draw (0,-3.5) rectangle (2,-4.5); \node at
    (1,-2.5) {$\vdots$}; \node at (2.5,-0.5)
    {$\uplambda_1$}; \node at (2,-1.5) {$\uplambda_2$};
    \node at (1,-4) {$\uplambda_\ell$};
  \end{tikzpicture}%
\vspace{-0.3cm}
\end{wrapfigure}
 \subsection{Partitions}
\label{sec:partitions}

  We denote by \(\lambda \dashv N\) a partition \(\lambda\) of \(N \geq 0\), which is  a non-increasing sequence  \(\lambda=(\lambda_1,\lambda_2,\dotsc)\)  of non-negative integers such that  \(\abs{\lambda} = \sum \lambda_i = N\).
  The transposed partition \(\lambda^\top\)  is defined by  \(\lambda^\top_i = \# \{h \suchthat \lambda_h \geq  i\}\).
  Partitions are usually identified with Young diagrams, as in the picture on the right. The  only partition of \(0\)  is the empty partition \(\varnothing\),  and the only partition of \(1\) is the one-box partition \(\onepartition\).

\subsection{Labeled tangles}
\label{sec:labeled-tangles}

 We let \(\catTangles^\lab\) be the category of oriented framed tangles whose connected components are labeled by partitions. We will regard a morphism in \(\catTangles^\lab\) as a pair consisting of a tangle \(T\) and a labeling \(\ell\) of its strands. Given a tangle \(T\) and a partition \(\lambda\), we denote by \((T,\underline \lambda)\) the labeled tangle \(T\) such that all strands are labeled by \(\lambda\). There is an obvious inclusion \(\catTangles \into \catTangles^\lab\), given by \(T \mapsto (T, \underline \onepartition)\). We will sometimes use \emph{color} as a synonymous of \emph{label}.

\section{The quantized oriented Brauer category}
\label{sec:quant-wall-brau}

We recall the definition of the quantized oriented Brauer algebra/category, following \cite{MR3181742}.

\begin{definition}\label{def:2}
  The \empha{quantized oriented Brauer category} \(\uBr(t)\) is the quotient of \(\catTangles_\fieldk\)
   modulo the following relations
\begin{subequations}
  \label{eq:5}
  \begin{align}
     \begin{tikzpicture}[smallnodes,anchorbase,xscale=0.7,yscale=0.5]
   \draw[uno] (1,0)  -- ++(0,0.3) \midarrow .. controls ++(0,0.7) and ++(0,-0.7) .. ++(-1,1.4) -- ++(0,0.3)  \midarrow ;
   \draw[uno,cross line] (0,0)  -- ++(0,0.3) \midarrow .. controls ++(0,0.7) and ++(0,-0.7) .. ++(1,1.4) -- ++(0,0.3)  \midarrow ;
    \end{tikzpicture} \; - \;
    \begin{tikzpicture}[smallnodes,anchorbase,xscale=0.7,yscale=0.5]
   \draw[uno] (0,0)  -- ++(0,0.3) \midarrow .. controls ++(0,0.7) and ++(0,-0.7) .. ++(1,1.4) -- ++(0,0.3)  \midarrow ;
   \draw[uno,cross line] (1,0)  -- ++(0,0.3) \midarrow .. controls ++(0,0.7) and ++(0,-0.7) .. ++(-1,1.4) -- ++(0,0.3)  \midarrow ;
    \end{tikzpicture} \; & = (q^{-1}-q) \;
    \begin{tikzpicture}[smallnodes,anchorbase,yscale=0.5,xscale=0.7]
   \draw[uno] (1,0) .. controls ++(0.25,0.5) and ++(0.25,-0.5)  .. ++(0,2) \midarrow ;
   \draw[uno] (2,0) .. controls ++(-0.25,0.5) and ++(-0.25,-0.5)  .. ++(0,2) \midarrow  ;
    \end{tikzpicture}\;,  \qquad&
        \begin{tikzpicture}[smallnodes,anchorbase,scale=0.7]
      \draw[uno] (0,0) arc (0:360:0.5cm) \midarrowrev;
    \end{tikzpicture} \; =\;
        \begin{tikzpicture}[smallnodes,anchorbase,scale=0.7]
      \draw[uno] (0,0) arc (0:360:0.5cm) \midarrow;
    \end{tikzpicture} \; &= [\var], \label{eq:52}\\
    \begin{tikzpicture}[smallnodes,anchorbase,xscale=0.7,yscale=0.5]
   \draw[uno] (0.8,1)  .. controls ++(0,-0.3) and ++(0.3,0) .. ++(-0.3,-0.6)  .. controls ++(-0.5,0) and ++(0,-0.7) .. ++(-0.5,1.6) \nendarrow  ;
   \draw[uno, cross line] (0.8,1)  .. controls ++(0,0.3) and ++(0.3,0) .. ++(-0.3,0.6) \startarrowrev .. controls ++(-0.5,0) and ++(0,0.7) .. ++(-0.5,-1.6) \nendarrowrev;
    \end{tikzpicture} \; = \;
    \begin{tikzpicture}[smallnodes,anchorbase,xscale=0.7,yscale=0.5]
   \draw[uno, cross line] (-0.8,1)  .. controls ++(0,0.3) and ++(-0.3,0) .. ++(0.3,0.6) \startarrowrev .. controls ++(0.5,0) and ++(0,0.7) .. ++(0.5,-1.6) \nendarrowrev ;
   \draw[uno, cross line] (-0.8,1)  .. controls ++(0,-0.3) and ++(-0.3,0) .. ++(0.3,-0.6)  .. controls ++(0.5,0) and ++(0,-0.7) .. ++(0.5,1.6) \nendarrow  ;
    \end{tikzpicture} \;& = q^{-\var} \;
    \begin{tikzpicture}[smallnodes,anchorbase,xscale=0.7,yscale=0.5]
      \draw[uno] (0,0)  -- ++(0,2) \midarrow ;
    \end{tikzpicture}\;, &
    \begin{tikzpicture}[smallnodes,anchorbase,xscale=0.7,yscale=0.5]
   \draw[uno, cross line] (0.8,1)  .. controls ++(0,0.3) and ++(0.3,0) .. ++(-0.3,0.6) \startarrowrev .. controls ++(-0.5,0) and ++(0,0.7) .. ++(-0.5,-1.6) \nendarrowrev ;
   \draw[uno, cross line] (0.8,1)  .. controls ++(0,-0.3) and ++(0.3,0) .. ++(-0.3,-0.6)  .. controls ++(-0.5,0) and ++(0,-0.7) .. ++(-0.5,1.6) \nendarrow  ;
    \end{tikzpicture} \; = \;
    \begin{tikzpicture}[smallnodes,anchorbase,xscale=0.7,yscale=0.5]
   \draw[uno] (-0.8,1)  .. controls ++(0,-0.3) and ++(-0.3,0) .. ++(0.3,-0.6)  .. controls ++(0.5,0) and ++(0,-0.7) .. ++(0.5,1.6) \nendarrow  ;
   \draw[uno, cross line] (-0.8,1)  .. controls ++(0,0.3) and ++(-0.3,0) .. ++(0.3,0.6) \startarrowrev .. controls ++(0.5,0) and ++(0,0.7) .. ++(0.5,-1.6) \nendarrowrev;
    \end{tikzpicture} \; & = q^{+ \var} \;
    \begin{tikzpicture}[smallnodes,anchorbase,xscale=0.7,yscale=0.5]
      \draw[uno] (0,0)  -- ++(0,2) \midarrow ;
    \end{tikzpicture}.\label{eq:54}
  \end{align}
\end{subequations}
\end{definition}

Being a quotient of the ribbon category \(\catTangles_\fieldk\), the category \(\uBr(\var)\) inherits a ribbon structure.
We denote by \(\calQ_t \colon \catTangles \mapto \uBr(\var)\) the composition of the inclusion \(\catTangles \into \catTangles_\fieldk\) with the quotient functor \(\catTangles_\fieldk \mapto \uBr(\var)\).
It is shown in \cite[lemma~2.4]{MR3181742}
that \(\End_{\uBr(t)}(\uparrow^{\otimes r}  \downarrow^{\otimes s})\) is free of rank \((r+s)!\). In particular, for the empty sequence \(\emptysequence\) we have \(\End_{\uBr(t)}(\emptysequence) \cong \fieldk \).

\subsection{The Hecke algebra}
\label{sec:hecke-algebra}

The endomorphism space \(\End_{\uBr(t)}(\uparrow^{\otimes r})\) is  the \empha{Hecke algebra} \(\ucalH_N\) over \(\fieldk\). 
As well known,  \(\ucalH_r\) is a finite-dimensional  semisimple algebra. Its finite-dimensional simple representations up to isomorphism are parametrized by partitions \(\lambda\) of \(r\), and we denote them by \(S(\lambda)\) for \(\lambda \dashv r\). In particular, \(\ucalH_r\) decomposes as
\begin{equation}
  \label{eq:3}
  \ucalH_r = \bigoplus_{\lambda \dashv\, r} e_\lambda \ucalH_r e_\lambda,
\end{equation}
where the \(e_\lambda\)'s are central idempotents, \(\ucalH_r e_\lambda \cong S(\lambda)^{\oplus \dim_\fieldk S(\lambda)}\) as a left module and \(e_\lambda \ucalH_r e_\lambda\) is isomorphic to a matrix algebra. We remark that one can write explicit formulas for the idempotents \(e_\lambda\), similarly as for the symmetric group, see \cite{MR873212}.

For each \(\lambda \dashv r\), we choose in \(e_\lambda \ucalH_r e_\lambda\) a primitive idempotent \(p_\lambda\). Unless \(e_\lambda\) itself is primitive (and this happens if and only if \(\lambda\) is a row or a column partition), the element \(p_\lambda\) is not uniquely determined. But any two choices are conjugated in \(e_\lambda \ucalH_r e_\lambda\), and hence also in \(\ucalH_r\).

\subsection{Cabling}
\label{sec:cabling-1}

We adopt the following graphical convention for picturing morphisms in the Brauer category: when we draw a thick strand labeled by \(\lambda\), this stands for \(\abs{\lambda}\) parallel strands, close to each other, with the idempotent \(p_\lambda\) somewhere on these strands:
\begin{equation}
  \label{eq:11}
  \begin{tikzpicture}[anchorbase]
    \draw[int] (0,0) -- ++(0,0.9) -- ++(0,0.1) \nendarrowint node[below right] {$\lambda$};
  \end{tikzpicture}  = \;
  \begin{tikzpicture}[anchorbase]
    \draw[uno] (0.1,0) -- ++(0,0.9) -- ++(0,0.1) \nendarrow ;
    \draw[uno] (0.9,0) -- ++(0,0.9) -- ++(0,0.1) \nendarrow ;
    \node[draw,fill=white,minimum width=1cm,minimum height=0.5cm] at (0.5,0.5) {$p_\lambda$};
    \node at (0.5,0.1) {$\cdots$};
    \node at (0.5,0.85) {$\cdots$};
  \end{tikzpicture}
\end{equation}
Since the Brauer category is ribbon, it does not matter at which point on the parallel strands we put the idempotent \(p_\lambda\), as we can slide it around. Of course, if convenient, we can also put more than one copy of \(p_\lambda\) along the strands.
This procedure allows us to define a monoidal functor \(\calQ_\var^\lab \colon \catTangles^\lab \mapto \uBr(\var) \).

\begin{remark}\label{rem:4}
  One can also make sense of this cabling procedure more formally in the category-theoretical setting. Namely, one can construct the Karoubi envelope  of the additive closure of \(\uBr(\var)\), as explained for example in \cite[section~2]{MR2998810}. This enlarged category is monoidally generated by primitive idempotents in \(\uBr(\var)\). In fact, for our purposes it would be sufficient to consider a partial Karoubi envelope \(\widetilde{\uBr}(\var)\) which is the smallest additive monoidal category containing all images of the primitive idempotents \(p_\lambda\). The ribbon structure of \(\uBr(\var)\) induces a ribbon structure on \(\widetilde{\uBr}(\var)\), and one obtains immediately a functor \(\calQ_\var^\lab \colon \catTangles^\lab \mapto \widetilde{\uBr}(\var)\).
\end{remark}

\section{Link invariants of type A}
\label{sec:link-invariants-type}

\begin{definition}\label{def:1}
  Let \(L\) be an oriented framed link.
  \begin{itemize}
  \item The \empha{HOMFLY-PT polynomial} of \(L\), denoted by \(P_\beta(L)\), is the image element \(\calQ_\beta(L) \in \End_{\uBr(\beta)}(\emptysequence) = \fieldk\).
  \item 
Let \(d \in \Z\). The 
\empha{\(d\)--polynomial} of \(L\), denoted by  \(P_d(L)\), is the image element \(\calQ_d(L) \in \End_{\uBr(d)}(\emptysequence) = \fieldk\).
  \end{itemize}
\end{definition}

\begin{remark}\label{rem:1}
  It follows  from the fact that \(\uBr(d)\) is defined over \(\C[q,q^{-1}]\) that \(P_{d}(L)\) is actually a Laurent polynomial in \(\C[q,q^{-1}]\). Similarly, one sees that \(P_\beta(L)\) is an element of \(\C\big[q,q^{-1},q^{\beta},q^{-\beta},[\beta]\big]\).
\end{remark}

\begin{definition}\label{def:5}
  Let \(L\) be an oriented framed link and \(\ell\) be a labeling of its strands.
  \begin{itemize}
  \item The \empha{\(\ell\)--labeled HOMFLY-PT polynomial} of \(L\), denoted by \(P^\ell_\beta(L)\), is the image element
\(\calQ^\lab_\beta( L,\ell ) \in \End_{\uBr(\beta)}(\emptysequence)= \fieldk\).
  \item 
Let \(d \in \Z\).
The \empha{\(\ell\)--labeled 
\(d\)--polynomial} of \(L\), denoted by \(P^\ell_d(L)\), is the image element
\(\calQ^\lab_d( L, \ell) \in \End_{\uBr(d)}(\emptysequence)= \fieldk\).
  \end{itemize}
\end{definition}

\begin{lemma}\label{lem:5}
  The definition above does not depend on the choice of the elements \(p_\lambda\).
\end{lemma}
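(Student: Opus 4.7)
My plan is to work in the Karoubi envelope $\widetilde{\uBr}(\var)$ of Remark~\ref{rem:4}. Different choices of $p_{\lambda_i}$ can be modified one at a time, so it suffices to change a single $p_\lambda$ at a single $\lambda$-labeled component. Let $p_\lambda, p'_\lambda$ be two primitive idempotents in $e_\lambda \ucalH_{|\lambda|} e_\lambda$. They produce two objects $W_\lambda = (\uparrow^{\otimes |\lambda|}, p_\lambda)$ and $W'_\lambda = (\uparrow^{\otimes |\lambda|}, p'_\lambda)$ in $\widetilde{\uBr}(\var)$; by the conjugacy $p'_\lambda = u p_\lambda u^{-1}$ recalled at the end of Section~\ref{sec:hecke-algebra}, the morphism $u p_\lambda$ gives an isomorphism $W_\lambda \xrightarrow{\sim} W'_\lambda$.

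The crucial structural point is that $W_\lambda$ is \emph{absolutely simple}: by the primitivity of $p_\lambda$ in the semisimple algebra $\ucalH_{|\lambda|}$, we have $\End_{\widetilde{\uBr}(\var)}(W_\lambda) = p_\lambda \ucalH_{|\lambda|} p_\lambda \cong \fieldk$. For a single $\lambda$-labeled component, the cabled link yields an endomorphism of $W_\lambda$ which by absolute simplicity is a scalar $s(L)$ times $\id_{W_\lambda}$, and the labeled invariant then equals $s(L) \cdot \qdim(W_\lambda)$. Both $s(L)$ and $\qdim(W_\lambda)$ depend only on the isomorphism class of $W_\lambda$, and since $W_\lambda \cong W'_\lambda$ this makes the invariant independent of the choice of $p_\lambda$.

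The main obstacle is establishing that the scalar $s(L)$ is genuinely intrinsic, i.e., that the endomorphism of $W_\lambda$ induced by the cabled link corresponds, under the isomorphism $W_\lambda \cong W'_\lambda$, to the one induced on $W'_\lambda$. Concretely, using $p'_\lambda = u p_\lambda u^{-1}$, we think of inserting $p'_\lambda$ on the cable as inserting $u$, $p_\lambda$, $u^{-1}$ in order at three nearby points; I would then slide the $u^{-1}$ factor once around the closed cable by graphical isotopy in the ribbon category $\uBr(\var)$. The monodromy of transporting once around the $|\lambda|$-cable of a single component is a power of the full twist $\Delta \in \ucalH_{|\lambda|}$, which is central, so conjugation by it acts trivially on $u^{-1}$; thus $u^{-1}$ returns unchanged and cancels with the $u$ on the other side, leaving just $p_\lambda$. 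Multi-component links are handled by iterating this argument one component at a time.
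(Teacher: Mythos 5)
Your argument is correct and, at its core (the third paragraph), coincides with the paper's own proof: write \(p'_\lambda = x p_\lambda x^{-1}\) and slide the conjugating element around the cabled component by isotopy until it cancels against its inverse. The Karoubi-envelope packaging, the reduction to a scalar \(s(L)\) times \(\qdim(W_\lambda)\), and the appeal to the (central) full-twist monodromy --- which is in fact trivial, since the coupon returns unchanged under framed isotopy --- are harmless extras that the paper's one-line proof dispenses with.
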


\begin{proof}
Suppose \(\lambda \dashv N\), and let \(p_\lambda'\) another choice for a primitive idempotent in \(e_\lambda \ucalH_N e_\lambda\). Then there is an invertible element  \(x \in \ucalH_N\) such that \(x p_\lambda' x^{-1} = p_{\lambda}\). Since we can slide \(x\) around on the cabled strands and cancel it with \(x^{-1}\), the independence on the particular choice for \(p_\lambda\) follows.
\end{proof}

\begin{remark}\label{rem:2}
  In the general case, it is not clear to us how one can deduce 
from this definition that \(P_{m|n}^\ell(L)\) is a Laurent polynomial.
If \(\ell\) labels every strands by a one-column partition, then this can be deduced using the category \(\Spider(\beta)\), introduced by the two authors in \cite{QS2}, which is defined over \(\C[q,q^{-1}]\). One can argue analogously if \(\ell\) labels every strands by a one-row partition.
\end{remark}

Notice that it follows immediately that the \(\beta=d\) specialization of the (labeled) HOMFLY-PT polynomial yields the (labeled) \(d\)--polynomial. We stress that Definition~\ref{def:5} is just a reformulation of Reshetikhin-Turaev's construction:

\begin{prop}\label{prop:1}
  Let \(m,n \in \Z_{\geq 0}\)
 and let \(d=m-n\). Then \(P^\ell_d(L)\) is the labeled \(\gl_{m|n}\) link invariant (given by the Reshetikhin-Turaev construction).
\end{prop}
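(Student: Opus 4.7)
The plan is to factor the Reshetikhin--Turaev functor for \(U_q(\gl_{m|n})\) through the quantized oriented Brauer category, and then recognize that the labeled cabling procedure of Section~\ref{sec:cabling-1} corresponds to the Schur functor construction of irreducible summands in tensor powers of the vector representation.

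First, I would recall the Reshetikhin--Turaev recipe for \(\gl_{m|n}\): it gives a monoidal functor \(\mathsf{RT}_{m|n}\colon \catTangles^\lab \to \catRep(U_q(\gl_{m|n}))\) sending \(\uparrow\) to the vector representation \(V\), \(\downarrow\) to its dual \(V^*\), a strand colored by \(\lambda\) to the Schur functor \(\bbS_\lambda V\), and the elementary positive/negative crossings, cups and caps to the braiding, duality and coevaluation morphisms of this ribbon category. The invariant \(\mathsf{RT}_{m|n}(L)\) lives in \(\End(\mathbf{1}) = \C(q)\).

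Second, I would produce a monoidal functor \(\Phi_{m|n}\colon \uBr(d)\to \catRep(U_q(\gl_{m|n}))\) such that \(\mathsf{RT}_{m|n}|_{\catTangles} = \Phi_{m|n}\circ \calQ_d\). Since \(\mathsf{RT}_{m|n}|_{\catTangles}\) is already defined on \(\catTangles_\fieldk\), to obtain \(\Phi_{m|n}\) it suffices to check that the three families of relations~\eqref{eq:52}--\eqref{eq:54} are satisfied when \(\var\) is specialized to \(d\). Concretely this amounts to: (i) the Hecke quadratic relation \(\check R - \check R^{-1} = (q^{-1}-q)\,\id\) for the R-matrix on \(V\otimes V\); (ii) the quantum superdimension computation \(\operatorname{qdim}_q V = [m-n] = [d]\) for the circle; and (iii) the identification of the positive and negative twists on \(V\) and \(V^*\) with multiplication by \(q^{\pm d}\). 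All three are classical facts for the \(U_q(\gl_{m|n})\) R-matrix; the relevant verification in our normalization is carried out in \cite{MR3181742}.

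Third, I would match the coloring conventions. By Schur--Weyl duality for \(\gl_{m|n}\), the action of \(\ucalH_N\) on \(V^{\otimes N}\) through \(\Phi_{m|n}\) identifies a primitive idempotent \(p_\lambda \in e_\lambda \ucalH_N e_\lambda\) with a projector from \(V^{\otimes N}\) onto a copy of \(\bbS_\lambda V\). Hence \(\Phi_{m|n}\) sends the thick strand labeled by \(\lambda\) in \(\uBr(d)\) to \(\bbS_\lambda V\) in \(\catRep(U_q(\gl_{m|n}))\), which is precisely the object assigned by \(\mathsf{RT}_{m|n}\) to the \(\lambda\)-labeled strand of \(\catTangles^\lab\). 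Consequently the extended functor \(\Phi_{m|n}\) intertwines the functors \(\calQ^\lab_d\) and \(\mathsf{RT}_{m|n}\), so applying it to the scalar \(\calQ^\lab_d(L,\ell) \in \End_{\uBr(d)}(\emptysequence) = \fieldk\) yields \(\mathsf{RT}_{m|n}(L,\ell) = P^\ell_d(L)\).

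The main technical hurdle is Step (ii)--(iii) of the second paragraph: checking that with our sign and normalization conventions for the \(\gl_{m|n}\) ribbon structure, the unknot and twist evaluate to \([d]\) and \(q^{\pm d}\) respectively, so that the functor \(\Phi_{m|n}\) actually descends through the quotient defining \(\uBr(d)\). Once this is in place the remainder is a routine translation between the Hecke-algebra idempotent description of Schur functors and the cabling convention of~\eqref{eq:11}.
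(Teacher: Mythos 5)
Your proposal is correct and follows essentially the same route as the paper: both factor the Reshetikhin--Turaev functor through \(\uBr(d)\) by checking the defining relations \eqref{eq:5} in \(\catRep_{m|n}\) (the paper delegates this to \cite{QS2}), and both conclude by identifying \(\End_{\uBr(d)}(\emptysequence)\) with \(\End_{U_q(\gl_{m|n})}(\C(q))\cong\C(q)\). The only detail the paper adds that you omit is the degenerate case \(m=n=0\), where \(\catRep_{0|0}\) must be interpreted as finite-dimensional \(\C(q)\)--vector spaces; this is harmless but worth noting since it underlies the \(d=0\) specialization used later.
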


\begin{proof}
  Let  \(\catRep_{m|n}\) denote the category of finite-dimensional representations of the quantum group \(U_q(\gl_{m|n})\).
Notice that one can make sense of this also for \(m=n=0\): in this case, \(\gl_{0|0}\) is the trivial (zero-dimensional) Lie algebra, and \(\catRep_{0|0}\) is equivalent to the category of finite-dimensional \(\C(q)\)--vector spaces.  The Reshetikhin-Turaev functor \(\RT_{m|n} \colon \catTangles^\lab \mapto \catRep_{m|n}\) factors as
 \begin{equation}\label{eq:4}
   \begin{tikzpicture}[baseline=(current bounding box.center)]
     \matrix (m) [matrix of math nodes, row sep=2.5em, column
       sep=5em, text height=1.5ex, text depth=0.25ex] {
       & \uBr(d) \\
      \catTangles^\lab & \catRep_{m|n} \\};
    \path[->] (m-2-1.north) edge node[midway,above] {$\calQ^\lab_d$} (m-1-2);
    \path[->] (m-2-1) edge node[midway,below] {$\RT_{m|n}$} (m-2-2);
    \path[->] (m-1-2) edge node[midway,right] {$\calG_{m|n}$} (m-2-2);
   \end{tikzpicture}
 \end{equation}
 (For this, one only has to check that the relations \eqref{eq:5} are satisfied in \(\catRep_{m|n}\), and this is well-known, see for example \cite[section~3]{QS2} and references therein.) Moreover, 
it is easy to see that \(\calG_{m|n}\) induces an isomorphism between \(\End_{\uBr(d)}(\emptysequence)\) and \(\End_{U_q(\gl_{m|n})}(\C(q))\), which are  both  naturally identified with \(\C(q)\). Hence we have \(\calQ^\lab_d(L,\ell) = \RT_{m|n}(L,\ell)\).
\end{proof}

As an immediate consequence, we obtain the following well-known important result: 
\begin{theorem}\label{thm:1}
  The \(\gl_{m|n}\) Reshetikhin-Turaev invariant of links colored by partitions only depends on the difference \(m-n\).
\end{theorem}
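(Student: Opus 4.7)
The proof is essentially an immediate consequence of Proposition~\ref{prop:1}, so the plan is simply to unpack what that proposition gives us and observe that the output of the construction only sees $d$.

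More explicitly, the plan is: given two pairs $(m,n)$ and $(m',n')$ with $m-n = m'-n' = d$, I want to show that $\RT_{m|n}(L,\ell) = \RT_{m'|n'}(L,\ell)$ for every labeled oriented framed link $(L,\ell)$. By Proposition~\ref{prop:1} applied to each pair, both sides are identified with the same element $\calQ^\lab_d(L,\ell) \in \End_{\uBr(d)}(\emptysequence) = \fieldk$. The point is that the factorization~\eqref{eq:4} routes the Reshetikhin-Turaev functor through the quantized oriented Brauer category $\uBr(d)$, whose definition (see Definition~\ref{def:2}) depends on $m$ and $n$ only through $d = m-n$ via the parameter $\var = d$ appearing in relations~\eqref{eq:52} and~\eqref{eq:54}. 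Similarly, the cabling construction that produces the labeled functor $\calQ^\lab_d$ is carried out entirely inside $\uBr(d)$ (or its partial Karoubi envelope, as in Remark~\ref{rem:4}) and hence depends only on $d$.

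I would write this up in two short sentences: fix $d = m-n$ and invoke Proposition~\ref{prop:1} to identify $\RT_{m|n}(L,\ell)$ with $\calQ^\lab_d(L,\ell)$; since the right-hand side depends on $(m,n)$ only through $d$, the result follows. There is no real obstacle here — all the work has been done in setting up the Brauer category as a $d$-parametrized object and in verifying in Proposition~\ref{prop:1} that the Reshetikhin-Turaev construction factors through it. The one thing worth double-checking is that the identification of $\End_{\uBr(d)}(\emptysequence)$ with $\C(q)$ and of $\End_{U_q(\gl_{m|n})}(\C(q))$ with $\C(q)$ used in Proposition~\ref{prop:1} are compatible (so that the equality of invariants really is an equality of scalars rather than up to some normalization); but this is already part of the statement and proof of Proposition~\ref{prop:1}, so nothing further is needed.
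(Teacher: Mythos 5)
Your proposal is correct and matches the paper exactly: the paper presents Theorem~\ref{thm:1} as an immediate consequence of Proposition~\ref{prop:1}, with no further argument, and your two-line unpacking (both $\RT_{m|n}(L,\ell)$ and $\RT_{m'|n'}(L,\ell)$ equal $\calQ^\lab_d(L,\ell)$, which depends only on $d$) is precisely the intended deduction.
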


In particular, \(P_{2}(L)\) is the \empha{Jones polynomial} of \(L\).

\subsection{Symmetry for the HOMFLY-PT polynomial}
\label{sec:symmetry-homfly-pt}

We conclude this section giving an easy proof of a well-known symmetry of the HOMFLY-PT polynomial, which follows immediately from the existence of an automorphism of the quantized oriented Brauer category.

\begin{theorem}\label{thm:2}
  Let \((L,\ell)\) be a labeled link, and let \(\ell^\top\) denote the transpose labeling (which labels each strands by the transpose partition). We have
  \begin{equation}
P^\ell_\beta(L)(q,q^\beta) = P^{\ell^\top}_\beta(L)(-q^{-1},q^\beta).\label{eq:13}
\end{equation}
\end{theorem}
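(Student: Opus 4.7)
The plan is to realize mirror symmetry as an automorphism of the quantized oriented Brauer category. Concretely, I would introduce the ring involution \(\sigma\colon\fieldk\to\fieldk\) given by \(\sigma(q)=-q^{-1}\) and \(\sigma(q^\beta)=q^\beta\), and verify that \(\sigma\) lifts to a (semi-linear) autoequivalence \(\tilde\sigma\) of \(\uBr(\beta)\) that acts as the identity on diagrams. For this I only need to check that every scalar appearing in \eqref{eq:52}, \eqref{eq:54} is \(\sigma\)-invariant: one has \(\sigma(q^{-1}-q)=q^{-1}-q\), \(\sigma(q^{\pm\beta})=q^{\pm\beta}\), and
\[
\sigma([\beta]) = \frac{q^\beta-q^{-\beta}}{-q^{-1}-(-q)} = \frac{q^\beta-q^{-\beta}}{q-q^{-1}} = [\beta].
\]

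Next I would interpret the action of \(\tilde\sigma\) on the invariant. Since \(P^\ell_\beta(L)\in\fieldk\), we have \(\tilde\sigma(P^\ell_\beta(L)) = P^\ell_\beta(L)(-q^{-1},q^\beta)\). On the other hand, \(P^\ell_\beta(L)=\calQ_\beta^\lab(L,\ell)\) is computed by inserting the primitive idempotents \(p_\lambda\in\ucalH_{|\lambda|}\) along the cabled strands. Because \(\tilde\sigma\) is the identity on diagrams but twists scalars by \(\sigma\), applying it only replaces each \(p_\lambda\), regarded as a polynomial in the generators \(T_w\) with coefficients in \(\fieldk\), by the element \(p_\lambda(-q^{-1})\) obtained by the substitution \(q\mapsto-q^{-1}\).

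The crucial input is then the following fact about the Hecke algebra, which I would state and prove (or cite) as a preliminary lemma: the element \(p_\lambda(-q^{-1})\) is a primitive idempotent in \(e_{\lambda^\top}\ucalH_{|\lambda|}e_{\lambda^\top}\). This is because the quadratic relation \((T_i-q)(T_i+q^{-1})=0\) is stable under \(q\mapsto-q^{-1}\), which merely swaps the two eigenvalues \(q\) and \(-q^{-1}\); consequently, under this substitution the simple module \(S(\lambda)\) is exchanged with \(S(\lambda^\top)\), as one sees already at the extremes where the symmetrizer and antisymmetrizer are interchanged. Granted the lemma, Lemma~\ref{lem:5} (independence of the choice of primitive idempotent) gives
\[
\tilde\sigma\bigl(\calQ_\beta^\lab(L,\ell)\bigr) = \calQ_\beta^\lab(L,\ell^\top) = P^{\ell^\top}_\beta(L)(q,q^\beta),
\]
so that \(P^\ell_\beta(L)(-q^{-1},q^\beta) = P^{\ell^\top}_\beta(L)(q,q^\beta)\); the substitution \(q\mapsto-q^{-1}\) is involutive, which yields \eqref{eq:13}.

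The main obstacle is the Hecke-algebraic identification \(p_\lambda(-q^{-1})\leadsto p_{\lambda^\top}\). Once one accepts this as a standard fact, the rest is a formal verification of \(\sigma\)-invariance of the relations. A secondary, purely cosmetic issue is to phrase \(\tilde\sigma\) correctly: since it twists scalars it is not a \(\fieldk\)-linear functor in the literal sense, but rather an isomorphism between \(\uBr(\beta)\) and its scalar-pullback along \(\sigma\); this is what allows it to both be ``the identity on generators'' and simultaneously act nontrivially on the scalar-valued link invariant.
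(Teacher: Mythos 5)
Your proposal is correct and follows essentially the same route as the paper: the authors also define a \(\C\)--linear involution (they call it \(\tau\)) of \(\uBr(\beta)\) fixing all diagrams and sending \(q\mapsto -q^{-1}\), check the defining relations, and invoke the standard fact that this involution of \(\ucalH_N\) swaps \(S(\lambda)\) and \(S(\lambda^\top)\), so that \(\tau(p_\lambda)\) is conjugate to \(p_{\lambda^\top}\) and Lemma~\ref{lem:5} finishes the argument. Your additional verifications (\(\sigma\)-invariance of \(q^{-1}-q\) and \([\beta]\), and the remark about semi-linearity) are correct and only make explicit what the paper leaves as ``immediate to check.''
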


\begin{proof}
  In the case \(\beta\) generic, we can define a \(\C\)--linear  involution \(\tau\) of the quantized walled Brauer category which fixes all tangle diagrams and which sends \(q \mapsto -q^{-1}\). It is immediate to check that the defining relations are satisfied (notice that, since \(\beta\) is generic, \(\tau\) fixes \(q^{ \beta}\)). It is well-known that by applying \(\tau\) to the Hecke algebra \(\ucalH_N\) one  interchanges the simple representations \(S(\uplambda)\) and \(S(\uplambda^T)\). In particular, \(\tau(p_\lambda)\) is conjugated to \(p_{\lambda^\top}\). 
  Hence \(P^{\ell^\top}_\beta(L)(q,q^\beta)=\tau(P^{\ell}_\beta(L)(q,q^\beta))=P^{\ell}_\beta(L)(-q^{-1},q^\beta)\).
\end{proof}

Another 
 proof of this symmetry (although in a slightly different formulation) has been given using web categories in \cite[proposition~4.4]{TVW} (see also references therein for older discussions).

For the usual (uncolored) HOMFLY-PT polynomial we get the following property:

\begin{corollary}
  We have \(P_\beta(L)(q,q^\beta) = P_\beta(L)(-q^{-1},q^\beta).\)
\end{corollary}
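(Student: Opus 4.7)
The plan is to observe that the uncolored HOMFLY-PT polynomial $P_\beta(L)$ is the special case of the labeled invariant $P^\ell_\beta(L)$ in which every strand is labeled by the one-box partition $\onepartition$. This is exactly what the inclusion $\catTangles \hookrightarrow \catTangles^\lab$, $T \mapsto (T, \underline{\onepartition})$ from Section~\ref{sec:labeled-tangles} encodes: the composite $\calQ^\lab_\beta \circ (T \mapsto (T,\underline{\onepartition}))$ coincides with $\calQ_\beta$, since the primitive idempotent $p_{\onepartition}$ in $\ucalH_1 = \fieldk$ is just the identity, and cabling by $\onepartition$ does nothing. Hence $P_\beta(L) = P^{\underline{\onepartition}}_\beta(L)$.

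Now the one-box partition is self-transpose, $\onepartition^\top = \onepartition$, so the labeling $\underline{\onepartition}$ satisfies $(\underline{\onepartition})^\top = \underline{\onepartition}$. Applying Theorem~\ref{thm:2} to the labeling $\ell = \underline{\onepartition}$ then gives
\begin{equation*}
  P_\beta(L)(q,q^\beta) \;=\; P^{\underline{\onepartition}}_\beta(L)(q,q^\beta) \;=\; P^{(\underline{\onepartition})^\top}_\beta(L)(-q^{-1},q^\beta) \;=\; P_\beta(L)(-q^{-1},q^\beta),
\end{equation*}
which is the claimed symmetry.

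There is no real obstacle here: once Theorem~\ref{thm:2} is in hand, the corollary is a one-line specialization, and the only thing to verify is the (essentially trivial) identification of the unlabeled invariant with the $\underline{\onepartition}$-labeled one and the self-duality of the one-box partition under transposition.
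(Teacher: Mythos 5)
Your proof is correct and matches the paper's (implicit) argument: the corollary is stated without proof as an immediate specialization of Theorem~\ref{thm:2} to the labeling \(\underline{\onepartition}\), using exactly the identification \(P_\beta(L)=P^{\underline{\onepartition}}_\beta(L)\) and the self-transposedness of the one-box partition that you spell out.
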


\begin{remark}\label{rem:5}
  From the proof above it is also absolutely clear why this symmetry holds for the HOMFLY-PT polynomial but fails for the \(d\)--polynomial.
\end{remark}

\section{Reduced link invariants of type A}
\label{sec:reduc-link-invar}

In this section we introduce reduced link invariants, following ideas from \cite{GPT} and related works. The main goal is to get non-trivial invariants also in the case \(d=0\), and in particular to define the Alexander polynomial. Indeed, we have:

\begin{lemma}\label{lem:1}
  If \(d=0\)
  then \(P^\ell_0(L) = 0\)
  for all links \(L\) and non-trivial labelings \(\ell\).
\end{lemma}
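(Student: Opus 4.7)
The plan is to exhibit $P^\ell_0(L)$ as the closure of a Hecke-algebra element lying in a single block, and then to observe that this block contributes zero because the corresponding quantum dimension vanishes at $d=0$.

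Since $\ell$ is non-trivial, I would choose a component $K$ of $L$ whose label $\lambda = \ell(K)$ is a non-empty partition. After the cabling procedure of Section~\ref{sec:cabling-1}, this component becomes $\abs{\lambda}$ parallel oriented strands carrying the primitive idempotent $p_\lambda$. I would cut these $\abs{\lambda}$ strands open at a horizontal level where no other crossings occur. This presents $(L,\ell)$ as the closure of a tangle $T$ with $\abs{\lambda}$ upward endpoints on top and bottom; all other components of $L$, together with their own cablings and idempotents, sit entirely inside $T$.

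By the dimension count recalled just after Definition~\ref{def:2}, one has $\End_{\uBr(0)}(\uparrow^{\otimes \abs{\lambda}}) = \ucalH_{\abs{\lambda}}$, so $T$ defines an element of the Hecke algebra, and $P^\ell_0(L)$ equals its closure value in $\uBr(0)$. Exploiting the ribbon structure of $\uBr(0)$ together with $p_\lambda^2 = p_\lambda$, I would place (or slide) copies of $p_\lambda$ both at the very top and at the very bottom of $T$, yielding $T = p_\lambda\, T\, p_\lambda$. By the block decomposition \eqref{eq:3} and centrality of $e_\lambda$, this places $T$ in the summand $e_\lambda \ucalH_{\abs{\lambda}} e_\lambda$.

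The closure operation $\End_{\uBr(0)}(\uparrow^{\otimes \abs{\lambda}}) \to \End_{\uBr(0)}(\emptysequence) = \fieldk$ is a conjugation-invariant linear functional, since any inner automorphism $x \mapsto u x u^{-1}$ can be realized as a tangle isotopy. On the matrix-algebra block $e_\lambda \ucalH_{\abs{\lambda}} e_\lambda$, any such functional is proportional to the matrix trace, and the proportionality constant is the closure value of the rank-one idempotent $p_\lambda$ itself, i.e.\ the value of the $\lambda$-colored unknot. A standard computation identifies this value with the quantum dimension $\qdim_d(S(\lambda)) = \prod_{(i,j) \in \lambda} [d-i+j]/[h(i,j)]$. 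At $d=0$ the diagonal box $(1,1) \in \lambda$ contributes the factor $[0]=0$, so $\qdim_0(S(\lambda)) = 0$ and consequently $P^\ell_0(L) = 0$.

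The main obstacle is bookkeeping: after cutting the single component $K$, the remaining (possibly highly entangled) components of $L$ must all be absorbed into a single element of $\ucalH_{\abs{\lambda}}$. The universal dimension count guarantees this identification, but one must verify carefully that the resulting closure indeed reproduces $\calQ^\lab_0(L,\ell)$. Once this is in place, the block-wise vanishing applies uniformly regardless of how complicated the original link is.
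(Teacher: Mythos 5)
Your argument is correct, but it takes a genuinely different route from the paper. The paper's proof is a one-liner: by Proposition~\ref{prop:1} and Theorem~\ref{thm:1}, \(P^\ell_0(L)\) equals the \(\gl_{0|0}\) Reshetikhin--Turaev invariant, and in \(\catRep_{0|0}\) (finite-dimensional \(\C(q)\)--vector spaces) the image of \(p_\lambda\) for any non-empty \(\lambda\) is zero, so the invariant vanishes. You instead work entirely inside \(\uBr(0)\): cutting the chosen component presents \(L\) as the closure of \(T=p_\lambda T p_\lambda\), and since \(p_\lambda \ucalH_{\abs{\lambda}} p_\lambda\cong\fieldk\) (as the paper notes before Definition~\ref{def:6}) you get \(T=\alpha p_\lambda\) and hence \(P^\ell_0(L)=\alpha\cdot\tr_{\uBr(0)}p_\lambda\) --- in effect you are re-deriving the factorization \eqref{eq:12} that the paper only establishes in Section~5, which has the merit of making transparent exactly which factor vanishes and why the \emph{reduced} invariant can survive. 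The one external input in your proof is the content formula \(\tr_{\uBr(\var)}p_\lambda=\prod_{(i,j)\in\lambda}[\var-i+j]/[h(i,j)]\); this is indeed standard (Aiston--Morton, or Gyoja's \(q\)--symmetrizers), but it is worth noting either that it can be justified by the same specialization argument the paper uses in the proof of Lemma~\ref{lem:2} (the generic-\(\beta\) value is pinned down by its specializations at \(d\gg 0\), where it is the \(\gl_d\) quantum dimension), or that for this lemma you do not need the full formula: expanding \(p_\lambda\) into braids, \(\tr_{\uBr(0)}p_\lambda\) is a \(\fieldk\)--linear combination of evaluations of non-empty links, each of which vanishes by skein induction down to unlinks, which contribute powers of \([0]=0\). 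Your detour through conjugation-invariant functionals on the block \(e_\lambda\ucalH_{\abs{\lambda}}e_\lambda\) is correct but can be shortcut by the one-dimensionality of \(p_\lambda\ucalH_{\abs{\lambda}}p_\lambda\). Both proofs are sound; the paper's buys brevity from machinery already in place, yours buys self-containedness within the Brauer category and sets up the reduced invariants of Section~5.
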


\begin{proof}
  This follows immediately from Theorem~\ref{thm:1}, since the \(\gl_{0|0}\) link invariant is zero if at least one strand is labeled by a non-empty partition.
\end{proof}

Given a link \(L\) with a labeling \(\ell\) of its strands and a chosen strand labeled by \(\lambda\), we can cut open this strand and obtain a tangle  \(T \in \End_{\catTangles^\lab}(1)\). The link \(L\) is obtained as closure of the tangle \(T\):
 \begin{equation}
   \label{eq:6}
   \begin{tikzpicture}[anchorbase]
     \node[draw,minimum width=1cm, minimum height=1cm] at (0,0) {$L$};
   \end{tikzpicture} \; =\;
   \begin{tikzpicture}[anchorbase,yscale=0.5]
     \draw[int] (0,-0.5) -- ++ (0,1) .. controls ++(0,1) and ++(0,1) .. ++(1,0) \midarrowint -- node[above right] {\footnotesize $\lambda$} ++(0,-1) .. controls ++(0,-1) and ++(0,-1) ..  (0,-.5);
     \node[fill=white,draw,minimum width=0.5cm, minimum height=0.5cm] at (0,0) {$T$};
   \end{tikzpicture}
\end{equation}
Notice that, since the Hecke algebra is semisimple and \(p_\lambda\) is a primitive idempotent, we have \(p_\lambda \End_{\uBr(\var)} (\uparrow^{\abs{\lambda}}) p_\lambda \cong \fieldk\). This allows us to give the following definition:

\begin{definition}\label{def:6}
  Let \(L\) be an oriented framed link and \(\ell\) be a labeling of its strands. Regard \(L\) as closure of a tangle \(T\) obtained by cutting open a strand labeled by \(\lambda\), as in \eqref{eq:6}.
  \begin{itemize}
  \item The \empha{\(\ell\)--labeled \(\lambda\)--reduced HOMFLY-PT polynomial} of \(L\), denoted by \(P^{\ell,\lambda}_\beta(L)\), is the scalar \(\alpha\) such that  \(\calQ^\lab_\beta(T,\ell) = \alpha p_\lambda \in p_\lambda \End_{\uBr(\beta)}(1^{\abs{\lambda}}) p_\lambda \).
  \item Let \(d \in \Z\).
The \empha{\(\ell\)--labeled \(\lambda\)--reduced \(d\)--polynomial} of \(L\), denoted by \(P^{\ell,\lambda}_{d}(L)\), is the scalar \(\alpha\) such that  \(\calQ_{d}^\lab(T,\ell) = \alpha p_\lambda \in p_\lambda \End_{\uBr(d)}(1^{\abs{\lambda}}) p_\lambda\).
  \end{itemize}
\end{definition}

Of course, we have to check that the definition does not depend on the chosen strand labeled by \(\lambda\) (and on the particular point we cut on the strand). This is implied by Lemma~\ref{lem:2}
below.
First,  notice  that by applying \(\calQ_\var^\lab\) to both sides of \eqref{eq:6} we get
\begin{equation}\label{eq:12}
P_\var^{\ell}(L) = \tr_{\uBr(\var)} p_\lambda  \cdot P_\var^{\ell,\lambda}(L),
\end{equation}
 where
\begin{equation}
\tr_{\uBr(\var)} p_\lambda  \; = 
  \begin{tikzpicture}[smallnodes,baseline={([yshift=-0.5ex]0,0)}]
    \draw[int,->] (0,0) arc (180:0:0.3cm) 
  node[yshift=0.17cm,above] {$\smash{\lambda}$}  arc (360:180:0.3cm);
  \end{tikzpicture} = P^\lambda_\var (\unknot) \in \End_{\uBr(\var)}(\emptysequence).  
\label{eq:9}
\end{equation}
In particular, if \(\tr_{\uBr(\var)}p_\lambda\) is non-zero then \(P^{\ell,\lambda}_\var(L)\) is well-defined and can be obtained by division from \(P^\ell_\var(L)\), whence the name ``reduced''.

\begin{lemma}\label{lem:2}
The invariants \(P^{\ell,\lambda}_\beta(L)\) and \(P_d^{\ell,\lambda}(L)\)  are well defined.
\end{lemma}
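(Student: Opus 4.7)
The starting point is the identity \eqref{eq:12} obtained by applying $\calQ^\lab_\var$ to both sides of \eqref{eq:6}: $P^\ell_\var(L) = \tr_{\uBr(\var)}(p_\lambda)\cdot\alpha$, where $\alpha\in\fieldk$ is the scalar defined by $\calQ^\lab_\var(T,\ell)=\alpha\,p_\lambda$. Since the left-hand side depends only on the closed link $L$, this identity determines $\alpha$ uniquely whenever $\tr(p_\lambda)\neq 0$. For the generic case $\var=\beta$, I would verify that $\tr_{\uBr(\beta)}(p_\lambda)=P^\lambda_\beta(\unknot)\in\C(q)[q^{\pm\beta}]$ is nonzero by specializing $\beta\mapsto N$ for any integer $N\geq|\lambda|$: by Proposition~\ref{prop:1} the result coincides with the strictly positive classical quantum dimension of the Schur summand $S(\lambda)\subset V^{\otimes|\lambda|}$ for the vector representation of $U_q(\gl_N)$, hence the original element does not vanish. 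Dividing \eqref{eq:12} then yields $\alpha = P^\ell_\beta(L)/\tr(p_\lambda)$, a formula manifestly independent of the choice of strand and cut point.

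For the integer case $\var=d\in\Z$ the trace $\tr(p_\lambda)$ may vanish (e.g.\ $d=0$ with nontrivial $\lambda$, consistently with Lemma~\ref{lem:1}), so I would reduce to the generic case by specialization. The substitution $q^\beta\mapsto q^d$ defines a ring homomorphism $\C(q)[q^{\pm\beta}]\to\C(q)$, and each of the defining relations \eqref{eq:5} of $\uBr(\beta)$ maps to the corresponding relation of $\uBr(d)$: the skein relation is $\beta$-independent, the bubble becomes $[d]$, and the vertical twists become $q^{\pm d}$. This substitution therefore induces a monoidal functor $\uBr(\beta)\to\uBr(d)$ that intertwines the cabling functors $\calQ^\lab_\beta$ and $\calQ^\lab_d$ and fixes $p_\lambda\in\ucalH_{|\lambda|}$, the latter being defined already over $\C(q)$, independently of $\beta$. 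Writing $\alpha_\beta\in\C(q)[q^{\pm\beta}]$ for the scalar from the previous paragraph, applying this functor to the identity $\calQ^\lab_\beta(T,\ell)=\alpha_\beta\,p_\lambda$ produces $\calQ^\lab_d(T,\ell)=\alpha_\beta(q,q^d)\,p_\lambda$, so the scalar in the integer case is simply the specialization of $\alpha_\beta$ and therefore inherits its independence from the cut.

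The main obstacle is precisely the integer case when $\tr(p_\lambda)=0$, where \eqref{eq:12} carries no information and where the reduced invariant is most interesting. The resolution rests on the observation that well-definedness is universal in the formal parameter $\beta$, and that the reduced scalar $\alpha_\beta$ lives in the polynomial ring $\C(q)[q^{\pm\beta}]$ (not only in its fraction field), so specialization to any value $\beta=d$ is unambiguous.
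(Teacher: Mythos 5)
Your proposal is correct and follows essentially the same route as the paper: nonvanishing of \(\tr_{\uBr(\beta)}p_\lambda\) is deduced by specializing \(\beta\) to a large integer where it becomes a quantum dimension for \(U_q(\gl_N)\), the generic reduced invariant is then \(P^\ell_\beta(L)/\tr_{\uBr(\beta)}p_\lambda\), and the integer case is obtained by specializing at \(\beta=d\). The only difference is that you spell out the specialization step (the base-change functor \(\uBr(\beta)\to\uBr(d)\) fixing \(p_\lambda\) and the fact that \(\alpha_\beta\in\C(q)[q^{\pm\beta}]\)), which the paper compresses into ``by construction \(P^{\ell,\lambda}_d(L)\) is the specialization of \(P^{\ell,\lambda}_\beta(L)\)''.
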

\begin{proof}
Notice first that if \(d>0\) and \(\lambda\) is a partition with at most \(d\) rows, then \(\tr_{\uBr(d)} p_\lambda \) is the quantum dimension of the irreducible \(U_q(\gl_d)\)--module with highest weight corresponding to \(\lambda\), hence it is nonzero. 

Now, for \(\beta\) generic, \(\tr_{\uBr(\beta)} p_\lambda\) is always non-zero (since it has to specialize for \(\beta=d\) to \(\tr_{\uBr(d)}(p_\lambda)\), which for \(d\gg 0\) is non-zero). Hence \(P^{\ell,\lambda}_\beta(L)\) is equal to \(P^{\ell}_\beta(L)\) divided by \(\tr_{\uBr(\beta)} p_\lambda\), and so it is well-defined.

In the case \(\var=d \in \Z\), by construction \(P^{\ell,\lambda}_d(L)\) is the specialization of \(P^{\ell,\lambda}_\beta(L)\) at \(\beta=d\), hence also \(P^{\ell,\lambda}_d(L)\) is well-defined.
\end{proof}

Notice that, as follows from the proof of the lemma, for \(\beta\) generic the reduced invariants do not give any more information, since we always have
\begin{equation}
  \label{eq:16}
  P^{\ell,\lambda}_\beta(L) = \frac{P^{\ell}_\beta(L)}{\tr_{\uBr(\beta)} p_\lambda}.
\end{equation}
On the other hand, in the specialized case \(\var=d\) it often happens that \(\tr_{\uBr(d)} p_\lambda = 0\), and hence the invariant \(P^\ell_d(L)\) is zero as long as one of the strands is labeled by such a \(\lambda\), while the reduced invariant \(P^{\ell,\lambda}_d(L)\) may be non-zero (cf.\ also Remark~\ref{rem:3} below).

Let us denote by \(H_{m|n}\) the set of hook partitions of type \(m|n\) (i.e.\ the partitions \(\lambda\) with \(\lambda_{m+1} \leq n\)). Then we have the following counterpart of Proposition~\ref{prop:1}:

\begin{prop}\label{prop:2}
    Let \(m,n \in \Z_{\geq 0}\) and let \(d=m-n\). 
Suppose \(\lambda \in H_{m|n}\).
Then \(P^{\ell,\lambda}_d(L)\) is the \(\lambda\)--reduced \(\ell\)--labeled \(\gl_{m|n}\) link invariant (given by the Reshetikhin-Turaev construction). In particular, this link invariant only depends on \(m-n\).
\end{prop}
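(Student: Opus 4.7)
The plan is to bootstrap from Proposition~\ref{prop:1} by applying the factorization \eqref{eq:4} not to the closed link $L$ but to the open tangle $T$ obtained by cutting a $\lambda$-labeled strand as in \eqref{eq:6}. First I would extend the functor $\calG_{m|n}$ of that diagram to the partial Karoubi envelope $\widetilde{\uBr}(d)$ of Remark~\ref{rem:4}; this is automatic because $\catRep_{m|n}$ is already idempotent-complete, so $\calG_{m|n}$ factors uniquely through any additive/Karoubi completion of its source.

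Next, using Definition~\ref{def:6}, write $L$ as the closure of such a $T \in \End_{\catTangles^\lab}(\lambda)$, so that $\calQ^\lab_d(T,\ell) = \alpha\, p_\lambda$ with $\alpha = P^{\ell,\lambda}_d(L)$. Applying $\calG_{m|n}$ to both sides, the left side equals $\RT_{m|n}(T,\ell)$ by the commutativity of \eqref{eq:4} (read on the tangle $T$ rather than on its closure), while the right side becomes $\alpha \cdot \calG_{m|n}(p_\lambda)$, viewed as an endomorphism of $\calG_{m|n}(p_\lambda\uparrow^{\otimes|\lambda|}) \subset V^{\otimes|\lambda|}$, where $V$ is the natural $U_q(\gl_{m|n})$-module.

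The decisive step is to identify $\calG_{m|n}(p_\lambda)$ with the projector onto the covariant irreducible $U_q(\gl_{m|n})$-module $V_\lambda$ sitting inside $V^{\otimes|\lambda|}$. This is exactly quantum Schur-Weyl duality for $\gl_{m|n}$: the image of $\ucalH_{|\lambda|}$ under $\calG_{m|n}$ is the centralizer of $U_q(\gl_{m|n})$ on $V^{\otimes|\lambda|}$, and the isotypic decomposition is indexed precisely by hook partitions $\lambda \in H_{m|n}$. Since $p_\lambda$ is a primitive idempotent in the block $e_\lambda \ucalH_{|\lambda|} e_\lambda$, its image under $\calG_{m|n}$ projects onto a single copy of $V_\lambda$. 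By Schur's lemma $\End_{U_q(\gl_{m|n})}(V_\lambda) \cong \C(q)$, so $\RT_{m|n}(T,\ell)$ acts on $V_\lambda$ as a scalar, which is by definition the $\lambda$-reduced $\ell$-labeled $\gl_{m|n}$ invariant. Matching both sides gives $P^{\ell,\lambda}_d(L) = \alpha$ equal to that scalar, and the statement that the invariant depends only on $m-n$ is then inherited from the Brauer-side definition (or from Theorem~\ref{thm:1} applied to $T$).

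The main obstacle is the Schur-Weyl identification: one has to invoke (or state precisely) that for hook partitions $\lambda \in H_{m|n}$ the image $\calG_{m|n}(p_\lambda)$ is a nonzero primitive idempotent projecting onto the covariant irreducible $V_\lambda$, with conventions matching the standard reduced Reshetikhin-Turaev construction. The hook hypothesis $\lambda \in H_{m|n}$ is essential: for $\lambda \notin H_{m|n}$ the image $\calG_{m|n}(p_\lambda)$ vanishes and the reduced invariant cannot be extracted by this cutting procedure, which explains both the role of the hypothesis and why one needed to single out the reduced case in the first place (cf.\ Lemma~\ref{lem:1}).
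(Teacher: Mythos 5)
Your proposal is correct and follows essentially the same route as the paper: cut the link open to a tangle $T$, apply the factorization \eqref{eq:4} to $T$, and use that for $\lambda \in H_{m|n}$ the image of $p_\lambda$ under $\calG_{m|n}$ is a nonzero projector onto one copy of the simple module, so that $\calG_{m|n}$ identifies $p_\lambda \End_{\uBr(d)}(\uparrow^{\otimes\abs{\lambda}})p_\lambda$ with $\End_{U_q(\gl_{m|n})}(L(\lambda)) \cong \C(q)$. The paper's proof is exactly this argument stated more tersely; your additional remarks on the Karoubi envelope and on Schur--Weyl duality only make explicit what the paper leaves implicit.
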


\begin{proof}
  The proof is analogous to the proof of Proposition~\ref{prop:1}. 
Since \(\lambda \in H_{m|n}\), 
the image of \(p_\lambda\) in \(\catRep_{m|n}\) is non-zero (it projects onto one copy of the simple \(U_q(\gl_{m|n})\)--module \(L(\lambda)\) labeled by \(\lambda\)). In particular, \(\calG_{m|n}\) induces an isomorphism between the idempotent truncation \(p_\lambda \End_{\uBr(d)}(\uparrow^{\otimes \abs {\lambda}}) p_\lambda\) and \(\End_{U_q(\gl_{m|n})}(L(\lambda))\), which are both naturally identified with \(\C(q)\). Hence the claim follows from the commutativity of \eqref{eq:4}.
\end{proof}

\begin{corollary}\label{cor:1}
  The link invariant \(P^{\underline \onepartition,\onepartition}_0(L)\) is (up to rescaling) the \empha{Alexander polynomial} of \(L\).
\end{corollary}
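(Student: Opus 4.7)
The plan is to reduce the statement to the known identification of the $\gl_{1|1}$ vector-representation link invariant with the Alexander polynomial, using Proposition~\ref{prop:2} as the bridge.

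First, I would specialize Proposition~\ref{prop:2} to the case $m=n=1$, so that $d = m-n = 0$, with $\lambda = \onepartition$ and $\ell = \underline{\onepartition}$. One has to check that $\onepartition \in H_{1|1}$, which is immediate since for the one-box partition $\lambda_2 = 0 \leq 1 = n$. Therefore Proposition~\ref{prop:2} applies and identifies $P^{\underline{\onepartition},\onepartition}_0(L)$ with the $\onepartition$-reduced $\underline{\onepartition}$-labeled $\gl_{1|1}$ Reshetikhin-Turaev invariant of $L$, where each strand carries the vector representation of $U_q(\gl_{1|1})$.

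Next, I would invoke the classical fact (due to Kauffman-Saleur, Reshetikhin, and others) that the reduced Reshetikhin-Turaev invariant of links colored by the vector representation of $U_q(\gl_{1|1})$ coincides, up to a normalization factor depending on the writhe and the number of components, with the Alexander polynomial of $L$. Since Definition~\ref{def:6} provides one particular choice of reduction (cutting open one strand labeled by $\onepartition$ and extracting the scalar along $p_{\onepartition}$, which in this case is just the identity on one strand), the two invariants agree up to rescaling, which is exactly the statement of the corollary.

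The main step, then, is simply appealing to the known Reshetikhin-Turaev identification at $\gl_{1|1}$; the only potential subtlety is bookkeeping the normalization, which is why the statement only claims equality up to rescaling. I would therefore end the proof with a reference to the standard literature on the $\gl_{1|1}$ invariant and the Alexander polynomial (e.g.\ \cite{GPT} and the references therein, which are already cited in the introduction to this section), rather than reproving the identification from scratch.
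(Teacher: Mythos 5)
Your proposal is correct and follows essentially the same route as the paper: apply Proposition~\ref{prop:2} with $m=n=1$ and $\lambda=\onepartition$ to identify $P^{\underline\onepartition,\onepartition}_0(L)$ with the reduced $\gl_{1|1}$ Reshetikhin--Turaev invariant, then cite the known identification of that invariant with the Alexander polynomial (the paper points specifically to \cite[theorem~4.10]{SarAlexander}, and also notes an alternative via the skein relations).
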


\begin{proof}
  This follows from the proposition above together with \cite[theorem~4.10]{SarAlexander}. Alternatively, one can argue that, up to rescaling, \(P^{\underline \onepartition,\onepartition}_0\) satisfies the skein relations of the Alexander polynomial (see for example \cite[(4.23) and (4.24)]{SarAlexander}).
\end{proof}

\begin{remark}\label{rem:3}
  In the above proposition it is crucial to assume that \(m,n\) are big enough so that \(\lambda \in H_{m|n}\).
This makes a big difference with  Proposition~\ref{prop:1}. Indeed, Proposition~\ref{prop:1} implies that the labeled \(\gl_{m|n}\) link invariant vanishes as long as one strand is labeled by a partition \(\lambda\) such that  \(\lambda \notin H_{m-k|n-k}\) for some \( k \geq \min \{m,n\}\). For example, the \(\gl_{2|1}\) link invariant always vanishes if one strand is labeled by a partition with more than one row. On the other side, Proposition~\ref{prop:2} does not imply that the \(\lambda\)--reduced \(\gl_{m|n}\) link invariant vanishes if the \(\gl_{m-1|n-1}\) does. In particular, it does not imply that the \(\lambda\)--reduced \(\gl_{2|1}\) invariant vanishes if \(\lambda\) has more than one row.
\end{remark}

\providecommand{\bysame}{\leavevmode\hbox to3em{\hrulefill}\thinspace}
\providecommand{\MR}{\relax\ifhmode\unskip\space\fi MR }
\providecommand{\MRhref}[2]{%
  \href{http://www.ams.org/mathscinet-getitem?mr=#1}{#2}
}
\providecommand{\href}[2]{#2}

\end{document}